\newtheorem{thm}{Theorem}
\let\paragraph\subsection
\title{Geodesics for Discrete manifolds}
\author{Oliver Knill}
\date{March 23, 2025}
\address{Department of Mathematics \\ Harvard University \\ Cambridge, MA, 02138 }
\subjclass{}
\keywords{Discrete Geodesics}
\begin{document}
\maketitle

\begin{abstract}
The geodesic flow on a finite discrete q-manifold with or without boundary 
is defined as as a permutation of its ordered q-simplices. This allows to 
define geodesic sheets and a notion of sectional curvature.
\end{abstract}

\section{Geodesic flow}

\paragraph{}
A {\bf finite abstract simplicial complex} is a finite set of non-empty sets $G$ closed under the
operation of taking finite non-empty subsets. 
A {\bf $q$-manifold} is a finite abstract simplicial complex $G$ in which every 
{\bf unit sphere} $S(x)=\delta U(x) = \overline{U(x)} \setminus U(x)$ is a $(q-1)$
sphere, where $U(x)=\{ y \in G, x \subset y \}$ and $\overline{U(x)}$ is the 
closure in the Alexandrov topology in which all stars $\{ U(x), x \in G \}$ form a basis. 
A {\bf $q$-sphere} is a $q$-manifold $G$ such that $G \setminus U(x)$ is contractible for
some $x \in G$ and $G$ is {\bf contractible} if both $S(x)$ and $G \setminus U(x)$ are contractible
for some $x \in G$. As for the foundation of these inductive definitions, the 
empty complex $0=\{\}$ is the $(-1)$-sphere and the 1-point complex $1=\{ \{1\} \}$ is contractible. 

\paragraph{}
If $G$ is a discrete $q$-manifold, the set $\hat{G}$ of $q$-simplices
defines at first only a triangle-free graph with facets as vertices and with
edges $(x,y)$ turned on if $x \cap y$ is a $(q-1)$-simplex in $G$. Let $V(x)$ denote the
set of $0$-dimensional simplices in $x \in G$. The graph $\hat{G}$ upgrades to a 
{\bf discrete cell complex} $\hat{G}$ of dimension $q$ in which the {\bf dual cells}
$\hat{x} = \bigcup_{v \in V(x)} S(v)$ of a $k$-simplex are the
$(q-k-1)$-spheres bounding virtual $(q-k)$-cells in $\hat{G}$. 
The partial order structure of $G$ is dual to the partial order structure of $\hat{G}$ because
$\hat{x} \subset \hat{y}$ if and only if $y \subset x$. The exterior derivative on 
$\hat{G}$ can be defined, so that {\bf Poincar\'e duality}
$b_k=b_{q-k}$ follows from the {\bf Hodge picture for cohomology}, where the {\bf Betti numbers}
$b_k={\rm dim}{\rm ker}(L_k)$, where $L_k$ is the Laplacian on $k$-forms, functions on the set of 
$k$-simplices. 

\paragraph{}
We propose here a definition of geodesic flow and a notion of sectional 
curvature for $q$-manifolds. Both are fundamental notions in differential geometry and pivotal
in general relativity. The naive notion of geodesic as a shortest connection between two points is unsuitable because the 
radius of injectivity is always 2 and because there is no global geodesic flow in general as if
we have an oriented edge, there is no clear way how to continue that edge as in general many continuations
produce locally geodesic arcs of length $2$. Also the naive approach to curvature by taking as sectional 
curvature the Eberhard curvature $1-d(W)/6$ for every embedded wheel graph $W$ with central vertex degree $d(W)$ 
is much too restrictive: positive curvature manifolds are all $q$-spheres or diameter 2 or 3 \cite{SimpleSphereTheorem}. 
We would like to have a chance to prove a {\bf sphere theorem} telling that sufficiently pinched simply connected 
q-manifolds should be spheres. We also want to reproduce the known list of positive curvature manifolds. 

\begin{figure}[!htpb]
\scalebox{0.75}{\includegraphics{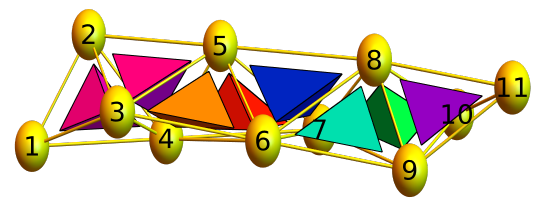}}
\scalebox{0.75}{\includegraphics{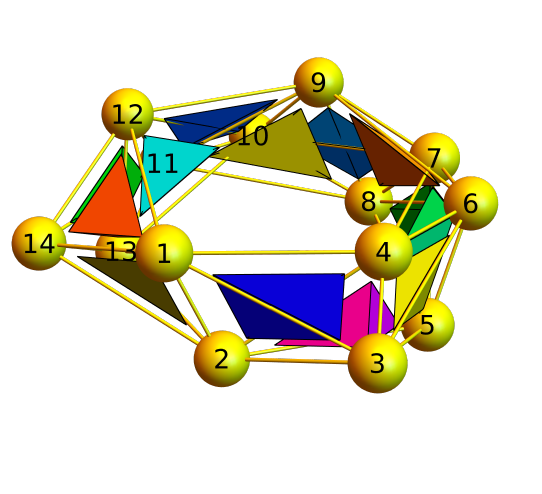}}
\label{definition of the geodesics}
\caption{
The map $T$ maps a totally ordered facet $x$ into its adjacent facet.
To the picture to the left: here $(1,2,3,4)$ is an ordered simplex in a 3-manifold. There exists
exactly one dual simplex $(2,3,4,5)$. The direction of the flow depends
on the orientation. The orientation defines a base point $1$ and an
ordered triple of edges $\{ (12),(13),(14) \}$ that can be thought of as
a basis with base $1$. The geodesic flow transports that basis to
the next simplex $\{ (23),(24),(25) \}$. 
}
\end{figure}

\paragraph{}
If $G$ is a finite abstract simplicial complex, the set $P$ of all {\bf oriented facets} $x$
is a {\bf discrete frame bundle}. For $G=\{ \{1\},\{2\},\{3\},\{1,2\},\{2,3\},\{1,3\} \}$ 
for example, we have $P=\{ \{1,2\},\{2,1\},\{2,3\},\{3,2\},\{1,3\},\{3,1\}\}$. 
Even-so we are in a discrete setting, $P$ is a {\bf principle bundle} with structure group
$S_{q+1}$, the {\bf symmetric group} with $(q+1)!$ elements. With $f_q$ facets, there are $f_q (q+1)!$ points in $P$. 
We think about $x=(x_0,\dots, x_q)$ as a $q$-dimensional {\bf frame attached to $x_0$}
because the total order gives a base point $x_0$ and $q$ basis vectors $(x_0,x_j)$ 
with $1 \leq j \leq q$. The {\bf geodesic flow} is a discrete dynamical systems
$(P,T)$ given by the permutation of $P$ defined by 
$$ T(x_0,x_1,\dots, x_q) = (x_1,x_2, \dots, x_{d-1},x_0') \; , $$
where $x_0'$ is opposite to $x_0$, meaning that 
$\hat{(x_0,x_1, \dots, x_{q-1})}=\{x_0,x_0'\}$ is the boundary 0-sphere of a dual 1-cell.
The permutation $T$ defines a dynamical system on oriented facets in the same way as
the geodesic flow produced a volume preserving reversible flow on the 
unit sphere bundle $SM$ of a manifold $M$. Similarly as the geodesic path in the continuum depends 
also on the initial velocity, the path of the discrete geodesic flow depends on 
the initial permutation. There are $q!$ different directions in which the geodesic can go
as there are $q!$ different frames above every facet $x$. In the above example, the flow is 
$\{1,2\} \to \{2,3\} \to \{3,1\}, \{ 1,2 \}$. 

\begin{figure}[!htpb]
 \scalebox{0.75}{\includegraphics{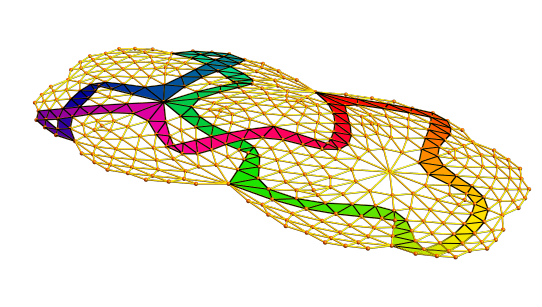}}
 \scalebox{0.75}{\includegraphics{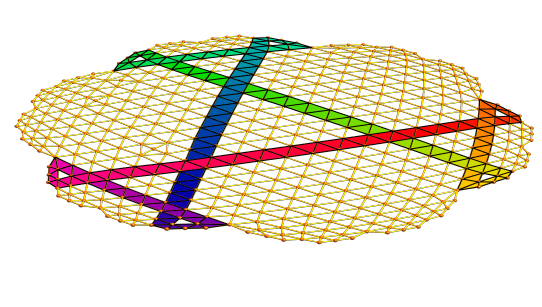}}
 \label{Billiard}
\caption{
When reaching boundary parts, the map does a self rotation on the simplex until a continuation
is possible. It produces a discrete billiard map. The map can be defined for all pure simplicial
complexes for which the stable sphere $S^+(x)=U(x) \setminus \{x\}$ has $1$ or $2$ elements only
for all $(q-1)$-simplices. 
}
 \end{figure}

\paragraph{}
The definition can be extended to {\bf $q$-manifolds with boundary}, in which case the 
geodesic flow defines a {\bf billiard}. If $x$ is at the boundary, meaning that its intersection
is a facet in the boundary, we just let $T$ rotate on itself,
then continue the evolution on $\hat{G}$. So, if the dual $x_0'$ to the wall 
$(x_1,\dots, x_q)$ does not exist, we define 
$$ T(x_0,x_1,\dots, x_q) = (x_1,x_2, \dots, x_{d-1},x_0)  \; . $$
We use this ``self turning" until a continuation is possible. It might not like if a $q$-simplex
is isolated. Extending the definition as such 
allows to define the geodesic flow to a large class {\bf pure simplicial complexes}, complexes of maximal 
dimension $q$ that are generated by the set of its facets = maximal simplices. Note that the 
set $U$ of all facets is an open set in the Alexandrov topology. For a pure simplicial complex,
the closure $\overline{U}=G$. In the language of set theoretical topology on the finite set $G$: 
a pure simplicial complex $G$ has a dense set of facets. 

\paragraph{}
Lets look at the {\bf path complex} $G=\{ \{1,2\},\{2,3\},\{3,4\} \}$ which is a $1$-manifold with boundary. 
The geodesic rule maps $\{1,2\} \to \{2,3\} \to \{3,4\} \to \{4,3\} \to \{ 3,2\} \to \{2,1\} \to \{1,2 \}$. The particle
bounces forth and back on the interval. In the cover $P$, this is a circle. 
An extreme example is if $G$ is generated by a simple $q$-simplex (the Whitney complex of a complete graph $K_{q+1}$),
which can be interpreted $q$-manifold with boundary for which every $(q-1)$ simplex is a boundary face. 
Now $T$ induces a cyclic rotation on $G$. While in dimension $0$, it is a simple fixed point (as $P$ has only one element), 
there are in dimension $q$ already $(q-1)!$ different closed closed geodesics of length $q$ in that single $q$-simplex. 

\paragraph{}
The geodesic flow within a geodesic path $C$ in a $q$-manifold as translation within the path as $C$ can 
be seen as a $q$-manifold with boundary where every $(q-1)$-face is a boundary face. 
The length of a geodesic in a $q$-manifold obtained by 
the number of simplices is up to $q$ times the length of a $1$-dimensional geodesic path in the graph. 
Pushing energy values $\omega(x) = (-1)^{{\rm dim}(x)}$ to the set of simplices of some dimension $k$ 
can define {\bf form curvatures} $K(x)$ on facets \cite{FormCurvatures}. That the form curvature is zero can be 
seen by symmetry. As a closed curve has Euler characteristic $0$ and the curvature is the same on each simplex it must be zero. 
Geodesic arcs can be characterized as curves $C$ on the graph $\hat{G}$ such that 
the induced set of simplices in $G$ has zero curvature except at the end points. 
Closed geodesics have zero curvature everywhere. Note that since $T$ is always a permutation, every geodesic
is closed in the frame bundle $P$. Geodesics "curves" are made up of q-simplices and form tubes. They are q-manifolds 
with boundary which have zero q-form curvature everywhere. This is similar in the continuum: given an arbitrary 
Riemannian 3-manifold $M$ and a geodesic arc $C$, then the boundary of a tube of radius $r$ is a cylinder that has
Gauss curvature going to $0$ for $r \to 0$. 

\begin{thm}
For any q-manifold with or without boundary, there is a globally 
defined geodesic flow given as a permutation on a discrete finite 
frame bundle $P$. The permutation gives a notion of parallel transport. 
Every geodesic path is closed and produces union of facets in $P$ 
which defines a closed simplicial complex with zero curvature. 
\end{thm}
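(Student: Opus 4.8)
The plan is to split the statement into three tasks: that $T$ is a well-defined permutation of the finite set $P$, that it implements parallel transport, and that each orbit closes up into a zero-curvature subcomplex. First I would check that the defining rule is meaningful. Given an oriented facet $x=(x_0,x_1,\dots,x_q)$, the retained unordered wall $w=\{x_1,\dots,x_q\}$ is a $(q-1)$-simplex, and by the dual cell construction recalled above its dual $\hat{w}$ is a $(q-(q-1)-1)=0$-sphere, i.e. a set of exactly two facets. One of these is $x$ itself and the other determines the opposite vertex $x_0'$; hence $x_0'$ is \emph{uniquely} determined whenever $w$ is an interior wall, and when $w$ is a boundary wall $\hat{w}$ is a single point, so the self-turning rule applies. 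In every case $T(x)$ is a well-defined oriented facet, so $T\colon P\to P$ is a genuine map.

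Because $P$ is finite it suffices to exhibit an inverse, which also encodes reversibility of the flow. I would define $S$ by reading off the last coordinate: given $c=(c_0,\dots,c_q)$, form $w'=\{c_0,\dots,c_{q-1}\}$; if $w'$ is interior set $S(c)=(a_0,c_0,\dots,c_{q-1})$ with $a_0$ the vertex of the facet opposite to $c_q$ across $w'$, and if $w'$ is a boundary wall set $S(c)=(c_q,c_0,\dots,c_{q-1})$. A direct computation, using that ``opposite across $w$'' is an involution on the two facets through an interior wall, gives $S\circ T=\mathrm{id}$ and $T\circ S=\mathrm{id}$ in each case, so $T$ is a permutation. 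This argument treats the closed and the boundary (billiard) cases uniformly. For parallel transport I would interpret $(x_0,x_1,\dots,x_q)$ as the frame based at $x_0$ with basis vectors $(x_0,x_j)$; then $T$ restricts to a fibrewise bijection over the edge $(x,x')$ of $\hat{G}$ that the geodesic crosses, so composing these fibre maps along a path transports frames and around a closed loop yields a holonomy element of the structure group $S_{q+1}$. Closedness of geodesics is then immediate, since a permutation of a finite set is a disjoint union of cycles, giving $T^n(x)=x$ for some $n\geq 1$.

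It remains to analyze the projection of an orbit to $G$: the union $C$ of the visited facets with all their faces is a subcomplex that should be a ``tube'', a $q$-manifold with boundary. Its image in $\hat{G}$ is a closed cyclic curve, hence a $1$-complex of Euler characteristic $0$, and pushing the energies $\omega(x)=(-1)^{\dim(x)}$ to the facets produces the form curvature $K$ with $\sum_{\text{facets}} K=\chi(C)=0$; since $T$ permutes the facets of the orbit transitively, all the $K$-values coincide and therefore each vanishes.

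I expect the main obstacle to lie in this last step: making the transitivity/symmetry argument rigorous when the geodesic \emph{revisits} a facet, so that $C$ is an immersed rather than embedded tube and the facets of $C$ are no longer in bijection with the steps of the orbit. One must argue carefully that the curvature pushed to each facet is genuinely constant along the orbit (not merely that the total is zero), and confirm that $C$ really is a $q$-manifold with boundary in which each $(q-1)$-face is either shared with a neighbouring step or lies on the boundary. Away from that point the proof is bookkeeping with the two-facets-per-wall property supplied by the manifold axiom.
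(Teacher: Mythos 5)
Your proposal is correct and follows essentially the same route as the paper's own (informally distributed) argument: well-definedness of $T$ from the fact that an interior wall's dual is a $0$-sphere consisting of exactly two facets (one facet for a boundary wall, triggering the self-turning rule), closedness because a permutation of the finite set $P$ decomposes into cycles, and zero curvature because the closed tube has Euler characteristic $0$ while the pushed form curvature is the same on each facet. Your explicit inverse $S$ is the same mechanism the paper realizes via the induced dynamics on oriented $(q-1)$-simplices and later packages as $T=B\circ A$ with involutions, and the revisiting-a-facet issue you flag in the curvature step is exactly the point the paper dispatches with ``can be seen by symmetry,'' so you are, if anything, more careful than the source.
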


\paragraph{}
The following Mathematica code explains this better than any text. It can be read as pseudo code. 
Each procedure explains what it does. ``Generate" closes a set of sets in the Alexandrov topology, 
Whitney produces a complex from a graph, the walls are the simplices of dimension $q-1$, the facets
re the simplices of dimension $q$. The {\bf open star} is the set $U(x) = \{ y \in G, x \subset y \}$. 
The {\bf star sphere} $S^+(x) = U(x) \setminus \{x\}$ joined with the {\bf core sphere} 
$S^-(x) = \overline{ \{x\}} \setminus \{x\}$ is the {\bf unit sphere} 
$S(x) = \delta(U(x)) = S^+(x) \oplus S^-(x)$, which is a sphere. The map $T$ defines a permutation 
in the frame bundle of $G$. In the example, we take the octahedron complex, which is the Whitney complex
of $K_{2,2,2}$ and platonic solid, then take one of the 4 Catalan solids that are 2-manifolds. 

\begin{tiny}
\lstset{language=Mathematica} \lstset{frameround=fttt}
\begin{lstlisting}[frame=single]
Generate[A_]:=If[A=={},{},Sort[Delete[Union[Sort[Flatten[Map[Subsets,A],1]]],1]]];
Whitney[s_]:=Map[Sort,Generate[FindClique[s,Infinity,All]]];
Walls[G_] :=Select[G,(Length[#]==Max[Map[Length,G]]-1) &];
Facets[G_]:=Select[G,(Length[#]==Max[Map[Length,G]]  ) &];
OpenStar[G_,x_]:=Select[G,SubsetQ[#,x]&];        Stable[G_,x_]:=Complement[OpenStar[G,x],{x}];
Mirror[G_,x_]:=Module[{U=Stable[G,x]},Table[First[Complement[U[[j]],Sort[x]]],{j,Length[U]}]];
T[G_,x_]:=Append[y=Delete[x,1];y,First[Append[Complement[Mirror[G,Sort[y]],{x[[1]]}],x[[1]]]]];
Orbit[G_,x_,n_]:=Module[{S},S[X_]:=T[G,X];NestList[S,x,n]]; 
Orbit[G_,x_]:=Module[{y=x},o={y};While[y=T[G,y];Not[MemberQ[o,y]],o=Append[o,y]];o];
P[G_]:=Module[{F=Facets[G]},Flatten[Table[Permutations[F[[k]]],{k,Length[F]}],1]]; 
T[G_]:=Module[{r=P[G],q={}},While[Length[r]>0,q=Append[q,Orbit[G,r[[1]]]];r=Complement[r,o]];q];
G=Whitney[PolyhedronData["ArchimedeanDual","Skeleton"][[11]]]; Orb=T[G]; Map[Length,Orb]
\end{lstlisting}
\end{tiny}

\paragraph{}
We will next construct geodesic sheets and geodesic curvature for arbitrary q-manifolds. The following 
computes the geodesic curvature for a $(q-2)$-simplex and assumes that the above code has been read. 

\begin{tiny}
\lstset{language=Mathematica} \lstset{frameround=fttt}
\begin{lstlisting}[frame=single]
SecCurvature[G_,x_]:=Module[{Q,r,y,z,l0,l1,l2},
  Q=Select[Stable[G,x],Length[#]==Length[x]+2 &]; l0=Length[Q];
  Sum[y=Q[[m]];r=Complement[y,x];Do[y[[j]]=x[[j]],{j,Length[x]}];
  y[[Length[x]+1]]=r[[1]]; y[[Length[x]+2]]=r[[2]]; z=T[G,y];
  {a,b}=Select[Q,(Length[Intersection[y,#]]==Length[x]+1) &];
  l1=Length[Stable[G,Intersection[y,z,a]]]/2;  
  l2=Length[Stable[G,Intersection[y,z,b]]]/2; 
  1/(3l1)+1/(3l2),{m,l0}]+(2-l0)/6];
Bones[G_]:=Select[G,(Length[#]==Max[Map[Length,G]]-2) &];
SecCurvatures[G_]:=Module[{},B=Bones[G];Table[SecCurvature[G,B[[k]]],{k,Length[B]}]];
Dual[G_]:=Module[{F=Facets[G],e={},n,m},n=Length[F];m=Length[First[F]];
  Do[If[Length[Intersection[F[[k]],F[[l]]]]==m-1,
     e=Append[e,F[[k]]->F[[l]]]],{k,n},{l,k+1,n}]; UndirectedGraph[Graph[e]]];
LocalDisk[G_,x_]:=Module[{Q,Q1,Q2,QQ,y,r,z},
  Q=Select[Stable[G,x],Length[#]==Length[x]+2 &]; QQ=Q;
  Do[y=Q[[m]];r=Complement[y,x];Do[y[[j]]=x[[j]],{j,Length[x]}];
  y[[Length[x]+1]]=r[[1]]; y[[Length[x]+2]]=r[[2]]; z=T[G,y];
  {a,b}=Select[Q,(Length[Intersection[y,#]]==Length[x]+1) &];
  Q1=Stable[G,Intersection[y,z,a]]; QQ=Union[QQ,Q1];
  Q2=Stable[G,Intersection[y,z,b]]; QQ=Union[QQ,Q2],{m,Length[Q]}];
  Subgraph[Dual[G],QQ]];

s=UndirectedGraph[Graph[{1->5,1->7,1->2,1->4,1->8,1->9,2->6,2->9,2->3,2->5,2->10,
 4->3,4->5,4->11,4->7,4->12,8->7,8->14,8->9,8->15,9->10,9->15,3->7,3->10,3->6,
 3->11,5->6,5->12,5->13,10->11,10->15,6->7,6->13,6->14,11->12,11->15,7->14,
 12->13,12->15,13->14, 13->15,14->15}]];      RP2=G=Whitney[s]; 
Orb=T[G] ; Print["Orbit statistics ",Map[Length,Orb]];
Print["Sectional curvatures ",SecCurvatures[G],"Euler ",Total[SecCurvatures[G]]];

G=RP3=Generate[{{1,2,3,4},{1,2,3,5},{1,2,4,6},{1,2,5,6},
{1,3,4,7},{1,3,5,7},{1,4,6,7},{1,5,6,7},{2,3,4,8},
{2,3,5,9},{2,3,8,9},{2,4,6,10},{2,4,8,10},{2,5,6,11},
{2,5,9,11},{2,6,10,11},{2,7,8,9},{2,7,8,10},{2,7,9,11},
{2,7,10,11},{3,4,7,11},{3,4,8,11},{3,5,7,10},{3,5,9,10},
{3,6,8,9},{3,6,8,11},{3,6,9,10},{3,6,10,11},{3,7,10,11},
{4,5,8,10},{4,5,8,11},{4,5,9,10},{4,5,9,11},{4,6,7,9},
{4,6,9,10},{4,7,9,11},{5,6,7,8},{5,6,8,11},{5,7,8,10},{6,7,8,9}}];
Orb=Orbits[G] ; Print["Orbit statistics ",Map[Length,Orb]];
Print["Sectional curvatures ",SecCurvatures[G]];

ToGraph[G_]:=UndirectedGraph[n=Length[G];Graph[Range[n],
  Select[Flatten[Table[k->l,{k,n},{l,k+1,n}],1],(SubsetQ[G[[#[[2]]]],G[[#[[1]]]]])&]]];
Barycentric[s_]:=If[GraphQ[s],ToGraph[Whitney[s]],Whitney[ToGraph[s]]];
Barycentric[s_,n_]:=Last[NestList[Barycentric,s,n]];
G=Barycentric[Whitney[CompleteGraph[{2,2,2,2}]]]; Print["Sec Curvatures ",SecCurvatures[G]];
G=Barycentric[Whitney[CompleteGraph[{2,2,2,2}]],2]; Print["Sec Curvatures ",SecCurvatures[G]];

Cat=PolyhedronData["ArchimedeanDual","Skeleton"];
G=Whitney[Cat[[3]]];   K=SecCurvatures[G];Print[{Union[K],Total[K]}];
G=Whitney[Cat[[4]]];   K=SecCurvatures[G];Print[{Union[K],Total[K]}];
G=Whitney[Cat[[7]]];   K=SecCurvatures[G];Print[{Union[K],Total[K]}];
G=Whitney[Cat[[11]]];  K=SecCurvatures[G];Print[{Union[K],Total[K]}];
T2=Whitney[CirculantGraph[13,Sort[Union[Table[Mod[a^2,13],{a,12}]]]]];
K=SecCurvatures[T2];Print[{Union[K],Total[K]}];
K=SecCurvatures[Barycentric[T2]];Print[{Union[K],Total[K]}];
K=SecCurvatures[Barycentric[T2,2]]; Print[{Union[K],Total[K]}];  (*-1/3,-1/36,0,1/36,1/9*)
\end{lstlisting}
\end{tiny}

\section{Geodesic sheets}

\paragraph{}
Let $G$ be a $q$-manifold, a finite abstract simplicial complex for which every unit sphere is a $(q-1)$-sphere. 
For a $(q-2)$ simplex $x$ in $G$, the dual $\hat{x}$ is the intersection of its
$(q-1)$ vertices and so a $1$-sphere. It defines a cyclic graph with $4$ or more vertices. 
This fact can be seen inductively: for a $0$-simplex $x$, the dual 
$\hat{x}$ is the unit sphere of $x$, a $(q-1)$ sphere. For an edge $\hat{x}$ is a unit sphere in the unit sphere
which is a $(q-1)$ sphere. This continues as such. The dual of a $(q-1)$-simplex $x$ is a $0$-sphere, a 
disjoint union of two points. The dual of a $(q-2)$ simplex $x$ is a $-1$-sphere, 
the empty graph. Back to the dual of a $(q-2)$ simplex, the 
circular graph determines a {\bf virtual 2-cell} in $\hat{G}$ that contributes to the CW
construct. This 2-cell is not physically realized as a sub-complex in $G$, it is 
defined implicitly by $\hat{x}$, similarly as cells in a CW complexes are defined in classical
topology as objects glued to already existing spheres in the build-up. 

\begin{figure}[!htpb]
\scalebox{0.45}{\includegraphics{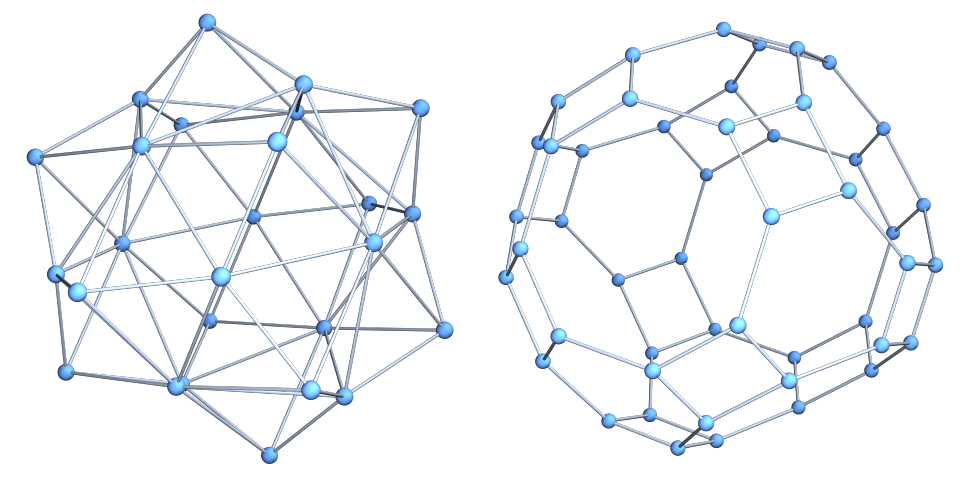}}
\scalebox{0.45}{\includegraphics{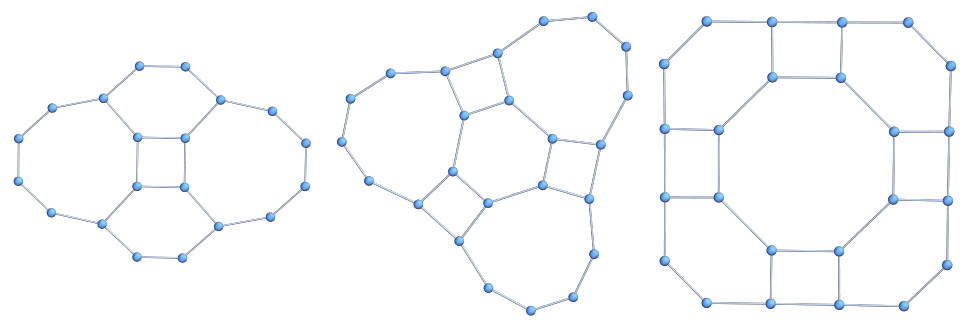}}
\label{Geodesic sheet}
\caption{
We see a 2-manifold $G$, the {\bf greatrhombicuboctahedron}, an example of a 2-sphere. Its
dual $\hat{G}$ is the {\bf disdyakisdodecahedron}. The Eberhard curvatures $1-d(v)/6$ of $G$ are 
$1/3,0,-1/3$. The bones are the q-2=0 simplices in $G$ which are the vertices. There are three
types of {\bf geodesic sheets} defining {\bf sectional curvature}
$(2-4)/6 + (2/3)(2/8+2/6)=1/18$ appearing 12 times,
$(2-6)/6 + (2/3)(3/4+3/8) = 1/12$  appearing 8 times and
$(2-8)/6 + (2/3)(4/4+4/6) = 1/9$ appearing 6 times. Unlike the first order Eberhard curvature, 
these second order curvatures are all positive. % 12/18+8/12+6/9==2 
They are also the {\bf partition curvatures} 
$\frac{(2-m)}{6}+\sum_{i=1}^m \frac{1}{p_i}$ of 
the partitions $p=(8,8,6,6), p=(8,8,8,4,4,4)$ and 
$p=(6,6,6,6,4,4,4,4)$ which encode the possible vertex degrees of 
a vertex and its neighbors. 
The partitions belong to the {\bf unit sphere degree} 28, 36, 40. 
We remark below that if the {\bf unit sphere degree} is less or equal 
than 31, then curvature is automatically positive.  
}
\end{figure}

\paragraph{}
The {\bf geodesic sheet} of a $(q-2)$ simplex  (="bone") 
is globally defined as in the continuum. There is a geodesic 
sheet for every 2 dimensional plane in the tangent space of a point.
Also as in the continuum, where the geodesics can return to the same point
in a different direction, also in the discrete, the sheet becomes multi-valued at 
some time, meaning that if we would start the geodesic flow at a
point on that sheet it would produce an other sheet. Classically, if we involve a 
geodesic beyond the radius of injectivity, the wave front loses the 
injectivity property and so should be considered as evolving in the 
unit tangent bundle.  In the frame bundle $P$, the geodesic sheet 
defines a global $2$ manifold. 

\begin{thm}
For every oriented bone $x$, there is a geodesic sheet. It is a 
$2$-manifold $M_x \subset P$. This manifold allows to define sectional curvature,
a quantity that independent of the orientation choice of $x$.
\end{thm}

\begin{figure}[!htpb]
\scalebox{0.75}{\includegraphics{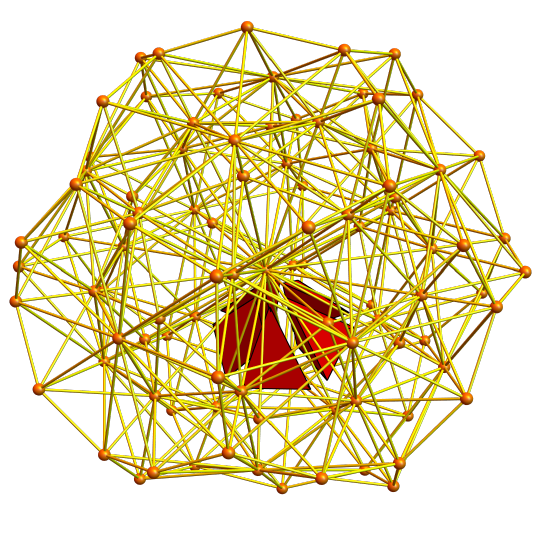}}
\scalebox{0.75}{\includegraphics{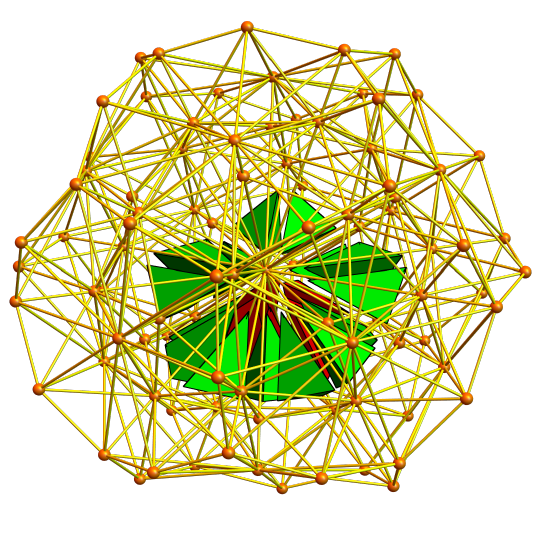}}
\label{Geodesic Sheet}
\caption{
We see a 3-ball with 2-sphere boundary given as the soft Barycentric
refinement of the icosahedron. We chose a random bone $x$ (here an edge
as $q=3$ and $q-2=1$). We first see the dual bone, a set of m=5 
simplices hinging on the bone. We use then the geodesic flow to extend 
this to a discrete two dimensional geodesic sheet which is the dual graph
of a 2-ball. The petal numbers are $p=(6,6,6,6,6)$ in this case. The 
geodesic sheet is made of 20 tetrahedra. 
The sectional curvature is the partition curvature of $p$ which is 
$K(x)=\frac{2-5}{6} + (\frac{3}{2})(\sum_{j=1}^5 \frac{1}{6}) = 1/18$. 
}
\end{figure}

\paragraph{}
The geodesic flow also defines an invertible dynamical system on $(q-1)$-simplices, because every 
oriented $q$-simplex $x$ determines the $(q-1)$-simplex $y=T(x) \cap x$. On the other
hand, an oriented $(q-1)$ simplex $y$ has as a dual $\{ x_0,x_0' \}$ which then 
gives oriented $q$-simplex $x=(x_0,y)$ that satisfies $T(x) =(y,x_0')$. So, the geodesic
flow produces a partition of ordered $(q-1)$-simplices. 
The geodesic sheets on the other hand define a partition of oriented 
$(q-2)$-simplices. 

\paragraph{}
Having been able to define $k=1$ and $k=2$ dimensional geodesic sheets to 
evolve from a $(q-k)$ simplex as starting point, it suggests
to define for any $(q-3)$-simplex a $3$-dimensional 
oriented sheet and in general for any $(q-k)$-simplex a {\bf $k$ dimensional sheet}
in the universal cover $P$ of $G$. For $k=1$, we get the geodesic path 
defined by a 0-sphere (a pair of simplices intersecting in a $(q-1)$-simplex (a wall). 
For $k=3$, where the dual sphere is a $2$-sphere, building evolution is already 
more evolved as we have to build an geodesic at every vertex of the dual bone, build 
all bones attached to it. 

\paragraph{}
A long shot question is then whether in the case of positive curvature and
sufficient pinching of all these sectional curvatures, the complex $G$ must be a 
sphere. In the case of positive curvature all geodesic sheets are spheres. This
does not mean that the manifold is a sphere. It will be interesting to see
what happens in the case of the complex projective plane $G=\mathbb{CP}^2$. 
The geodesic spheres need to be spheres but the three dimensional geodesic spaces
do not foliate this 4 manifold in the same way than in the 4-sphere. There are some
non-contractible spheres as $H^2(G)=1$ (the Betti vector of $G$ is $(1,0,1,0,1)$. 

\paragraph{}
It is interesting in this finite setting that we can continue the geodesic spaces 
for all times and get manifolds in the frame bundle. This is in contrast to the 
continuum, where the geodesic sheets can be very complicated as the frame bundle
has a continuum structure group. 

\section{Sectional curvature}

\paragraph{}
A $(q-2)$-simplex in a simplicial complex is also called {\bf bone}. 
Its {\bf dual bone} $\hat{x}$ is a circular graph in $\hat{G}$, formed by all the facets
hinging on the bone. It can also be seen as the unstable manifold $S^+(x) = U(x) \setminus \{x\}$ of a 
co-dimension 2 simplex whose Barycentric refinement is a circle. 
The join of $S^+(x)$ and simplex in $\hat{x}=S^-(x)$ is the unit sphere 
$S(x)= \delta U(x) = \overline{U(x)} \setminus U(x)$. 

\paragraph{}
We have seen that the form curvature $K(x)$ \cite{FormCurvatures} satisfy 
Gauss-Bonnet $\sum_{x \in B} K(x) = \chi(G)$.
If $\hat{x}$ be the dual of $x$, we can now look at the {\bf $q$-form curvature}
$K(\hat{x}) = \sum_{y \in \hat{x}} K(y)/(q+1)$. 

\paragraph{}
\begin{thm}
If $\hat{x}$ is a bone of length $m$ and $p_k$ are the lengths of the adjacent bones, then 
$K(\hat{x}) = \frac{2-m}{6} + (\frac{2}{3}) \sum_{k=1}^m \frac{1}{p_k}$. 
$\sum_{x \in B} K(\hat{x}) = \chi(G)$. 
\end{thm}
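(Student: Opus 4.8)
The plan is to reduce everything to the facet form curvature $K(y)=\sum_{z\subseteq y}(-1)^{\dim z}/c(z)$, where $c(z)$ is the number of facets of $G$ containing the simplex $z$. This is the distribution of the energies $\omega(z)=(-1)^{\dim z}$ to the facets lying above $z$, and it satisfies $\sum_{y}K(y)=\chi(G)$ by the interchange $\sum_y\sum_{z\subseteq y}\omega(z)/c(z)=\sum_z\omega(z)\,(c(z)/c(z))=\chi(G)$. First I would fix the local model of the dual bone: since $\hat x$ is a $1$-sphere of length $m$, the link of the $(q-2)$-simplex $x$ is a cycle $v_1,\dots,v_m$, the facets hinging on $x$ are $y_k=x\cup\{v_{k-1},v_k\}$, and the walls are $w_k=x\cup\{v_k\}$ with $c(w_k)=2$. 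The codimension-$2$ faces $(x\setminus\{x_i\})\cup\{v_k\}$, together with $x$ itself, are the bones meeting the $y_k$, and the ones selected by the geodesic flow have dual-cycle lengths $p_k$.

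Then I would compute $\sum_{y\in\hat x}K(y)$ by expanding each $K(y_k)$ over its subsimplices and grouping them by how they meet the two new vertices $\{v_{k-1},v_k\}$: faces inside $x$ (seen in all $m$ facets), faces $x'\cup\{v_k\}$ (seen twice, from $y_k$ and $y_{k+1}$), and faces $x'\cup\{v_{k-1},v_k\}$ (seen once). In the model dimension $q=2$ this collapses at once: a triangle $y_k=\{x_0,v_{k-1},v_k\}$ gives $K(y_k)=1-\tfrac32+\tfrac1m+\tfrac1{p_{k-1}}+\tfrac1{p_k}$, so summing over the $m$ triangles and dividing by $q+1=3$ yields exactly $\frac{2-m}{6}+\frac23\sum_k\frac1{p_k}$. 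For general $q$ I would invoke the geodesic-sheet theorem above: the sheet $M_x\subset P$ is a $2$-manifold whose central cell has degree $m$ and whose adjacent cells have degrees $p_k$, so the $q$-form curvature of $x$ equals the second-order vertex curvature of the centre of $M_x$ and the surface computation applies verbatim.

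For the Gauss–Bonnet identity I would interchange the order of summation:
\[
\sum_{x\in B}K(\hat x)=\frac1{q+1}\sum_{x\in B}\sum_{y\in\hat x}K(y)=\frac1{q+1}\sum_{y}K(y)\,\bigl|\{x\in B:\ x\subseteq y\}\bigr|,
\]
and since every $q$-simplex $y$ contains exactly $\binom{q+1}{2}$ bones this equals $\frac1{q+1}\binom{q+1}{2}\sum_y K(y)=\frac{q}{2}\,\chi(G)$, which is $\chi(G)$ exactly in the surface case $q=2$ to which the normalization $1/(q+1)$ is calibrated. Thus the second assertion is a formal consequence of the facet identity $\sum_y K(y)=\chi(G)$ once the count of bones per facet is in place.

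The hard part will be the general-$q$ reduction in the second paragraph: the genuine obstacle is showing that the many subsimplices $x'\cup\{v_k\}$ and $x'\cup\{v_{k-1},v_k\}$ with $x'\subsetneq x$ organize so that only the codimension-$2$ bones selected by the geodesic flow survive with weight $\tfrac23\cdot\tfrac1{p_k}$, while the remaining bulk terms cancel against the faces contained in $x$. This cancellation is precisely what the geodesic-sheet picture encodes, so the cleanest route is to prove that the frame-bundle $2$-manifold $M_x$ realizes $m$ and the $p_k$ as its intrinsic central and neighbouring vertex degrees, and then quote the $q=2$ calculation, rather than to grind the alternating sum over all faces of all $y_k$ directly.
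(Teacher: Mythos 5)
Your $q=2$ computation is exactly the paper's argument: the paper defines $K(\hat x)$ by averaging the Ishida--Higuchi facet curvature $K(t)=1-\tfrac32+\tfrac1a+\tfrac1b+\tfrac1c$ over the $m$ triangles hinging on the vertex and dividing by $q+1=3$, and your bookkeeping (central vertex seen $m$ times with weight $1/m$, each neighbour seen twice with weight $1/p_k$, walls with weight $1/2$) reproduces $\frac{2-m}{6}+\frac23\sum_k\frac1{p_k}$ just as the paper's own example paragraph does. Your Gauss--Bonnet interchange is likewise the counting behind the paper's one-line proof: for $q=2$ each facet contains $3=q+1$ bones, so the average telescopes to $\sum_y K(y)=\chi(G)$.

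For general $q$, however, your plan has a genuine gap, and it is not merely ``the hard part'' you flag at the end: the hoped-for cancellation is provably false. Take the $16$-cell, the Whitney complex of $K_{2,2,2,2}$, a $3$-sphere in this framework. Every facet has $K(y)=\tfrac48-\tfrac64+\tfrac42-1=0$, so the averaged ambient facet curvature $\frac1{q+1}\sum_{y\in\hat x}K(y)$ vanishes at every bone, while the partition formula gives $\frac{2-4}{6}+\frac23\cdot\frac44=\frac13$, which is what the paper's \texttt{SecCurvature} code returns, since that code uses only $m$ and the flow-selected $p_k$ of the sheet. So for $q>2$ the sectional curvature is intrinsically the second-order curvature of the two-dimensional geodesic sheet $M_x$ and is \emph{not} any redistribution of the ambient simplex energies over the dual cycle: the bulk terms coming from faces $x'\subsetneq x$ do not cancel. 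Your interchange, which honestly yields $\frac1{q+1}\binom{q+1}{2}\sum_yK(y)=\frac q2\,\chi(G)$, in fact pinpoints the same defect in the paper's proof: the claimed count ``exactly $q+1$ dual cycles through each bone'' is really ``each facet contains $\binom{q+1}{2}$ bones'', and these agree only at $q=2$. Moreover, with the partition-formula curvature the global identity $\sum_{x\in B}K(\hat x)=\chi(G)$ fails outright for $q\ge3$: the $16$-cell gives $24\cdot\frac13=8\ne0$, and the paper's own $\mathbb{R}\mathbb{P}^3$ example has all sectional curvatures in $\{1/5,7/30\}$ while $\chi=0$. The defensible statement is: your first paragraph proves the theorem for $q=2$; for $q>2$ the formula should be read as the definition of $K(\hat x)$ as the curvature of the central cell of the sheet (where your surface computation applies verbatim, with Gauss--Bonnet holding on the sheet), and the ambient Gauss--Bonnet sentence must be restricted to $q=2$ rather than derived by the cancellation you propose.
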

\begin{proof}
Use the Gauss-Bonnet for form curvature on $(q-2)$ simplices.
There are exactly $q+1$ dual cycles that pass through each bone $x \in B$. 
\end{proof} 

\paragraph{}
Example: In the case $q=2$, the bones are the vertices and $\hat{x}$ 
is the unit sphere of $x$ and $K(\hat{x})$ is a third of the
sum of the triangle curvatures of triangles containing $x$. 
For an icosahedron for example, where the triangle curvatures are $1/10$,
we have $K(\hat{x}) = 5*(1/10)/3 = 1/6$. 

\paragraph{}
We can now define {\bf sectional curvature} of $x$ as the curvature 
of $\hat{x}$ within the sheet. Note that curvature depends on the surrounding. 
The geodesic sheet of a bone defined a 2-dimensional complex that is the dual
of a 2-manifold. The sectional curvature was then the 2-form curvature of the 
discrete 2-manifold and this agreed with the Ishida-Higushi curvature for polyhedra. 
The Ishida-Higush curvature itself of a 2-manifold agrees with the Eberhard curvature.
Note that the curvature we introduce here is pushing the curvature back onto the bone.

\paragraph{}
In two dimensions, the bones are the vertices and the curvature is 
$$  K(v) = \frac{2-d(v)}{6} + (\frac{2}{3}) \sum_{k=1}^m \frac{1}{d_k(v)} \; , $$
where  $d(v)$ is the vertex cardinality of $v$ and $d_k(v)$ are the vertex cardinalities of the neighboring vertices. 
The code below shows some examples. The 4 Catalan solids that are 2-manifolds all have positive
curvature, while the naive Eberhard curvature has both positive and negative signs. 

\paragraph{}
The form curvatures satisfy Gauss-Bonnet and agree for 2-dimensional cell complexes with 
Ishida Higuchi curvature \cite{Higuchi} $H(v) = 1-|S(v)|/2 + \sum_{w \in S(v)}  1/|S(w)|$.
As pointed out in \cite{cherngaussbonnet},  had been considered by Gromov \cite{Gromov87}. 
For complexes with triangular faces it agrees with the {\bf Eberhard curvature} $1-|S(v)|/6$.  

\begin{figure}[!htpb]
\scalebox{0.55}{\includegraphics{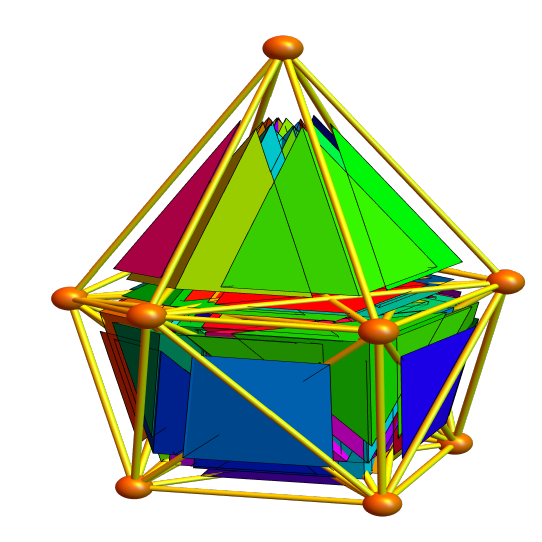}}
\scalebox{0.55}{\includegraphics{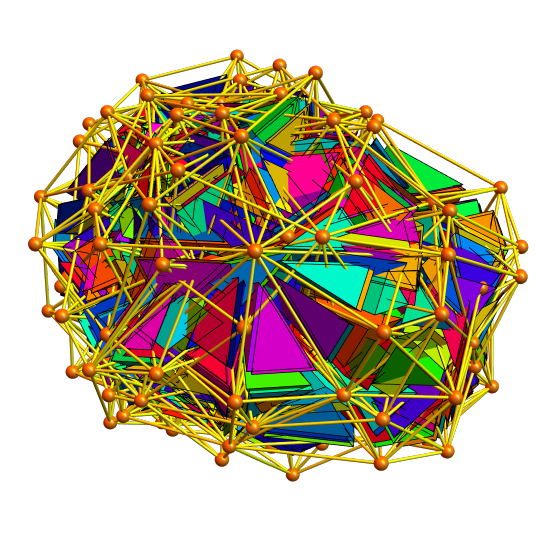}}
\scalebox{0.55}{\includegraphics{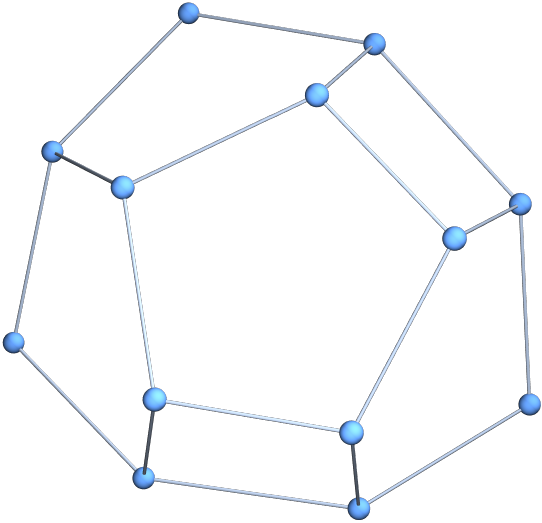}}
\label{RP3 example}
\caption{
A small $3$-manifold $G=\mathbb{R}\mathbb{P}^3$ with $f$-vector 
$f=(11,51,80,40)$ and Euler characteristic $\chi(G)=11-51+80-40=0$. 
Its refinement $G_1$ has $f$-vector $f(G_1)=(182,1142,1920,960)$.
%$A=\left[\begin{array}{cccc}1&1&1&1\\0&2&6&14\\0&0&6&36\\0&0&0&24\end{array}\right]$. 
We see one of the possible geodesic sheets of a central bone of order
$5$ surrounded by bones of order $p=(5,5,5,4,4)$, a partition of
$23$. The sectional curvature is 
$(2-5)/6 + (2/3)\sum_{i=1}^5 \frac{1}{p_i} = 7/30$. All sectional 
curvatures of $G$ are either $1/5$ or $7/30$. For the Barycentric
refinement the sectional curvatures are all in $\{1/9, 1/6, 2/9, 1/3\}$. 
Both $G$ and $G_1$ are positive curvature manifolds. 
}
\end{figure}

\section{A curvature for 2-manifolds}

\paragraph{}
As a side product of the sectional curvature investigation we can look independently 
at a curvature for 2-manifolds that is interesting by itself. 
A 2-manifold $G$ is a finite simple graph such that every unit sphere is a 
circular graph with $4$ or more elements. Its Euler characteristic is 
$\chi(G) = |V|-|E|+|F| = \sum_x \omega(x)$, where $\omega(x) = (-1)^{{\rm dim}(x)}$ 
The curvature
$$ K(v) = \frac{2-d(v)}{6} + (\frac{2}{3}) \sum_{w \in S(v)} \frac{1}{d(w)} $$
satisfies Gauss-Bonnet:

\begin{thm}
$\sum_{v \in V} K(v) = \chi(G)$. 
\end{thm}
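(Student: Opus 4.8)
The plan is to evaluate $\sum_{v \in V} K(v)$ directly by splitting the curvature into its two summands and reducing each to standard incidence counts on the graph. The whole argument is a double-counting exercise, so the real content is getting the two sums right and then matching the result to $\chi(G)=|V|-|E|+|F|$.

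First I would dispose of the linear term. Summing $\frac{2-d(v)}{6}$ over all vertices and applying the handshake identity $\sum_{v}d(v)=2|E|$ gives
$$ \sum_{v\in V}\frac{2-d(v)}{6} = \frac{1}{6}\bigl(2|V|-2|E|\bigr) = \frac{|V|-|E|}{3}. $$
The key step is the second term $\frac{2}{3}\sum_{v}\sum_{w\in S(v)}\frac{1}{d(w)}$. I would swap the order of summation, reading the double sum as a sum over oriented edges $(v,w)$ with $w$ a neighbor of $v$. For each fixed $w$, the vertices $v$ with $w\in S(v)$ are precisely the $d(w)$ neighbors of $w$, so the contributions $\frac{1}{d(w)}$ accumulate to $d(w)\cdot\frac{1}{d(w)}=1$. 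Summing over $w$ yields $\sum_{w}1=|V|$, whence the second term is exactly $\frac{2}{3}|V|$.

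Combining the two pieces gives
$$ \sum_{v\in V}K(v) = \frac{|V|-|E|}{3} + \frac{2}{3}|V| = |V|-\frac{|E|}{3}. $$
To finish I would identify this with $\chi(G)$ using that a $2$-manifold is a closed triangulation. Because each unit sphere $S(v)$ is a circular graph, every neighbor $b$ of a vertex $a$ has exactly two neighbors inside the cycle $S(a)$, so each edge $\{a,b\}$ lies in exactly two triangles; counting incidences between triangles and edges then gives $3|F|=2|E|$. Substituting $|F|=\tfrac{2}{3}|E|$ into $\chi(G)=|V|-|E|+|F|$ produces $|V|-\tfrac{1}{3}|E|$, matching the sum above.

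The main obstacle, modest as it is, is the order-swap in the second term and the bookkeeping of which vertex plays the role of center versus neighbor; the remaining ingredients are the handshake lemma and the edge--face relation $3|F|=2|E|$, the latter being where the manifold hypothesis (unit spheres are cycles, hence every edge is in two triangles) genuinely enters.
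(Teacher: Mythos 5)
Your proof is correct, and it takes a genuinely different route from the paper's. You verify the identity by direct double counting: the handshake lemma reduces the linear term to $(|V|-|E|)/3$, the order-swap collapses the neighbor sum to $\frac{2}{3}|V|$, and the manifold hypothesis enters only through the incidence relation $3|F|=2|E|$ (each edge lies in exactly two triangles because unit spheres are cycles), which matches $|V|-\frac{|E|}{3}$ with $\chi(G)=|V|-|E|+|F|$. The paper argues instead by energy transport: push the energies $\omega(x)=(-1)^{{\rm dim}(x)}$ from all simplices onto the triangles, producing the Ishida-Higuchi form curvature $K(t)=1-\frac{3}{2}+\frac{1}{d(a)}+\frac{1}{d(b)}+\frac{1}{d(c)}$ on each triangle $t=(a,b,c)$, then redistribute $K(t)/3$ from each triangle back to its three vertices; since each step only moves energy around, the total remains $\chi(G)$ throughout, and recovering the stated vertex formula uses that each vertex lies in $d(v)$ triangles and each neighbor $w$ of $v$ lies in exactly two triangles containing $v$ (the source of the factor $\frac{2}{3}$). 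Your computation is more elementary and self-contained, but it verifies the formula without explaining its coefficients; the paper's derivation shows where the formula comes from and embeds the theorem in the general form-curvature framework that extends to pushing energy onto facets in any dimension. One small point you should make explicit: the Whitney complex of such a graph is pure of dimension $2$ (a cycle $S(v)$ with at least $4$ vertices is triangle-free, so there are no $4$-cliques), which is what justifies writing $\chi(G)=|V|-|E|+|F|$ with $F$ the set of triangles.
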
 

\begin{proof} 
First define the form curvature supported on the triangles by moving all the 
energies $\omega(x)$ to the triangles $t=(a,b,c)$. This was the Ishida-Higushi
curvature.  This gives $K(t) = 1-3/2 + (\frac{1}{a} + \frac{1}{b} + \frac{1}{c})$
for every triangles. Then distribute the energy from the triangles to the vertices
by moving $K(t)/3$ from each triangle to its vertices. 
\end{proof} 

\paragraph{}
Here is some code which explains this. It is self-contained and does not need the above part. 

\begin{tiny}
\lstset{language=Mathematica} \lstset{frameround=fttt}
\begin{lstlisting}[frame=single]
Generate[A_]:=If[A=={},{},Sort[Delete[Union[Sort[Flatten[Map[Subsets,A],1]]],1]]];
Whitney[s_]:=Map[Sort,Generate[FindClique[s,Infinity,All]]]; 
Curvature[s_,v_]:=Module[{G=Whitney[s]},
  U=Select[G,(MemberQ[#,v]&&Length[#]==3)&];Sum[{a,b,c}=U[[j]];
  k=VertexDegree[s,a]; l=VertexDegree[s,b];m=VertexDegree[s,c];
  1/k+1/l+1/m -1/2,{j,Length[U]}]/3];
Curvatures[s_]:=Module[{V=VertexList[s]},Table[Curvature[s,V[[k]]],{k,Length[V]}]];
Cat=PolyhedronData["ArchimedeanDual","Skeleton"];
c3=Cat[[3]];  K=Curvatures[c3];Print[{Union[K],Total[K]}];
c4=Cat[[4]];  K=Curvatures[c4];Print[{Union[K],Total[K]}];
c7=Cat[[7]];  K=Curvatures[c7];Print[{Union[K],Total[K]}];
c0=Cat[[11]]; K=Curvatures[c0];Print[{Union[K],Total[K]}];
M=13;  torus=CirculantGraph[M,Sort[Union[Table[Mod[a^2,M],{a,M-1}]]]];
K=Curvatures[torus];Print[{Union[K],Total[K]}]
\end{lstlisting}
\end{tiny}

\section{Partition curvature} 

\paragraph{}
This motives to look at something independent to manifolds and to 
number theory. We can define the {\bf partition curvature} 
$K(p)$ of a {\bf partition} $p=(p_1, \dots, p_m)$ as
$$ K(p) = \frac{2-m}{6} + (\frac{2}{3}) \sum_{k=1}^m \frac{1}{p_k} \; .  $$
We only look at cases with $m>3,p_k>3$. 

\paragraph{}
This also puts the finger on the {\bf unit sphere vertex degree}
$$   \sum_{w \in S(v)} d(w) \;  $$  
of a graph.  The following is a handy test to establish positive curvature.

\begin{thm}[31 theorem]
If $\sum_{w \in S(v)} d(w) \leq 31$, then $K(v)>0$ independent of the 
degree of $v$. If $\sum_{w \in S(v)} d(w)=32$, then $K(v) \geq 0$ with
equality for $S(v)=4,S(w)=8$ for all $(v,w) \in E$. 
\end{thm}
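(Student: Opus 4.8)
The plan is to collapse the full degree partition $(p_1,\dots,p_m)$ of the neighbors onto the two scalars $m=|S(v)|=d(v)$ and $N=\sum_{w\in S(v)}d(w)$, and then reduce the claim to a one-variable polynomial inequality. Writing $p_1,\dots,p_m$ for the degrees $d(w)$ of the vertices $w\in S(v)$, the curvature is $K(v)=\frac{2-m}{6}+\frac23\sum_{k=1}^m p_k^{-1}$, so it suffices to bound $6K(v)=(2-m)+4\sum_{k=1}^m p_k^{-1}$ from below. The key observation is that once $m$ and $N=\sum_k p_k$ are fixed, the only remaining freedom in the term $\sum_k p_k^{-1}$ is controlled by the convexity of $x\mapsto 1/x$. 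Concretely, I would apply the arithmetic–harmonic mean inequality (equivalently Cauchy–Schwarz) to the positive reals $p_1,\dots,p_m$, giving $\bigl(\sum_k p_k\bigr)\bigl(\sum_k p_k^{-1}\bigr)\ge m^2$, i.e. $\sum_k p_k^{-1}\ge m^2/N$, with equality precisely when all $p_k$ are equal. Substituting yields the clean lower bound $6K(v)\ge (2-m)+4m^2/N$.

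For the first assertion I would use $N\le 31$, which only strengthens this bound since $4m^2/N$ is decreasing in $N$; thus $6K(v)\ge (2-m)+\frac{4m^2}{31}=:g(m)$. It then remains to check $g(m)>0$. Rather than enumerate the finitely many feasible $m$, I would simply note that $g$ is a quadratic in $m$ with positive leading coefficient $4/31$ and discriminant $1-4\cdot\frac{4}{31}\cdot 2=-\frac{1}{31}<0$, so $g(m)>0$ for every real $m$. Hence $K(v)>0$ whenever $N\le 31$, independent of the value of $d(v)$, as claimed.

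For the boundary case $N=32$ the identical substitution gives $6K(v)\ge (2-m)+\frac{4m^2}{32}=\frac{(m-4)^2}{8}\ge 0$, proving $K(v)\ge 0$. To pin down equality I would track when both inequalities are tight: $6K(v)=0$ forces the AM–HM step to be sharp, hence all $p_k$ equal, and simultaneously $(m-4)^2=0$, hence $m=4$; together these force $p_k=32/4=8$ for every $k$. This is exactly the configuration $|S(v)|=4$ and $d(w)=8$ for all $(v,w)\in E$ named in the statement, and one checks directly that it does give $6K(v)=(2-4)+4\cdot 4\cdot\frac18=0$.

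I expect no serious obstacle; essentially all of the content is in recognizing that the two-parameter family of degree partitions collapses, via AM–HM, onto the single scalar $N$, after which the inequality is a negative-discriminant check. The two points needing a little care are the direction of the AM–HM bound (we want a \emph{lower} bound on $\sum_k p_k^{-1}$, which AM–HM supplies) and the monotonicity in $N$ used to replace $N$ by its extremal value $31$ (respectively $32$). It is worth remarking that the geometric constraints $m\ge 4$ and $p_k\ge 4$ forced by the $2$-manifold structure are never used in the inequality itself — they serve only to confirm that the extremal partition $(8,8,8,8)$ at $m=4$ is genuinely realizable.
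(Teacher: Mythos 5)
Your proof is correct, and it takes a genuinely different route from the paper. The paper's proof is pure finite enumeration: it runs a few lines of Mathematica over all integer partitions of $31$, $32$, $33$ with $m>3$ and $p_k>3$ and inspects the signs. You instead collapse the partition onto the two scalars $m$ and $N=\sum_k p_k$ via AM--HM (Cauchy--Schwarz), getting $6K(v)\ge (2-m)+4m^2/N$, and then finish with a discriminant check. I verified the computations: the discriminant of $\frac{4}{31}m^2-m+2$ is $1-\frac{32}{31}=-\frac{1}{31}<0$, and at $N=32$ the bound becomes the perfect square $\frac{(m-4)^2}{8}$; your equality analysis correctly chains the two tightness conditions (AM--HM forces all $p_k$ equal, the square forces $m=4$, hence $p_k=8$), which reproduces exactly the paper's equality configuration. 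Your approach buys several things the paper's does not: it works for arbitrary real $m$ and $p_k>0$ rather than the finitely many integer partitions the code tests (so the hypotheses $m\ge 4$, $p_k\ge 4$ are genuinely superfluous, as you note); it explains \emph{why} the threshold sits at $32$, since the discriminant of $(2-m)+4m^2/N$ in $m$ is $1-32/N$, which changes sign precisely at $N=32$; and it proves uniqueness of the zero-curvature configuration rather than observing it empirically. What the paper's enumeration buys in exchange is immediacy and the side information at $N=33$ (the four explicit negative-curvature partitions $(10,9,7,7)$, $(10,8,8,7)$, $(9,9,8,7)$, $(9,8,8,8)$ quoted afterwards), which your continuous bound does not produce, since at $N=33$ the relaxed bound $(2-m)+4m^2/33$ still permits both signs and one must return to integer cases to locate the actual negatives.
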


\begin{proof}
This is just checking all cases. This can be done with a few lines.
\end{proof}

\begin{tiny}
\lstset{language=Mathematica} \lstset{frameround=fttt}
\begin{lstlisting}[frame=single]
PartitionCurvatures[n_]:=Module[{p=Select[IntegerPartitions[n],(Min[#]>3&&Length[#]>3)&]},
 Table[{p[[j]],(2/3) Sum[1/p[[j,k]],{k,Length[p[[j]]]}]+(2-Length[p[[j]]])/6},{j,Length[p]}]];
Min[Sort[PartitionCurvatures[31]]]
Select[PartitionCurvatures[32], #[[2]] == 0 &]
Select[PartitionCurvatures[33], #[[2]] < 0 &]
\end{lstlisting}
\end{tiny}

\paragraph{}
For $\sum_{w \in S(v)} d(w) \leq 33$ there are 4 cases
$K(10, 9, 7, 7)=-(2/945), K(10, 8, 8, 7)=-(1/210)$, 
$K(9,9,8,7)=-(5/756)$ and $K(9, 8, 8, 8)= -(1/108)$. 
This suggests to look at the {\bf total sphere vertex degree} $d(S(v))= \sum_{w \in S(v)} d(w)$. 
If the total sphere vertex degree is everywhere smaller than 32, we have positive curvature. 

\paragraph{}
We can use this for sectional curvature. If the sum of the neighboring bone degrees
of a bone in a d-manifold is smaller than 32 for all bones, then we have a positive
curvature manifold is smaller than 32 for all bones, then we have a positive
curvature manifold is smaller than 32 for all bones, then we have a positive curvature manifold.

\paragraph{}
We can ask many questions like: 

\begin{itemize}
\item Which partition spectra are possible for q-manifolds?  We do not know yet how to 
characterize the rational numbers or whether all rational numbers can occur. 
\item Is it true that we can realize any 2-manifold as a geodesic sheet of a higher dimensional 
manifold? Which 3 manifolds can be realized as three dimensional sheets in a 4 manifold? 
\item Under which conditions is positive curvature invariant under Barycentric refinement? 
Here is an example showing that the answer is "no". If $G$ is the octahedron complex
then its curvature is constant $1/3$ and agrees with the Eberhard curvature. 
After a Barycentric refinement, we get the spectrum $\{1/18,1/12,1/9 \}$ which 
is still a positive curvature 2-sphere. 
But the next refinement produces the spectrum $\{-1/9, -1/72, 0, 1/72, 1/36, 1/9 \}$. 
The positive curvature is gone. 
\item Is there a bound for $n=\sum_{i=1}^m p_i$ of a partition
after which we negative curvature? The example $p=(n-12,4,4,4)$ shows that the answer is no. 
Even if ask all vertex degrees to be larger than 4, then there always only 
4 cases  $(n,x,5,5,5)$, with $x=5,6,7$  or $(n-20,5,5,5,5)$. 
It is also not true that for large enough $m$, we have always negative curvature:
$(2-m)/6 + (2/3) \sum_{i=1}^m 1/p_i \leq 1/3-m/6+ (2/3) m/4 = 1/3$ shows that if 
all neighbors of a degree $m$ vertex have degree $4$ then the curvature is $1/3$. 
However, if no vertex degrees 4 are allowed, then for $m>10$, we always have negative
curvature. The reason is $1/3-m/6+ (2/3) m/5 = 1/3-m/30$. 
\end{itemize}

\paragraph{}
If all vertex degree have to be larger than 5, there are no cases.
If only the central vertex degree can be 5, there are a few cases $x6666$ with $x=6,7,8,9,10,11$.
                                                                  Then there is $x7666$ with $x=7,8,9$
If we have an isolated 5 and the unit sphere vertex degree is larger than 35, we have no positive curvature. 

\section{Poincare Hopf}

\paragraph{}
Curvature is what happens if all energies $\omega(x) = (-1)^{{\rm dim}(x)}$ contributing to 
{\bf Euler characteristic} $\chi(G) = \sum_{x \in G} \omega(x)$ are pushed to using a specific
rule to the vertices $V=V(G) = \{ x \in G, {\rm dim}(x)=0 \}$ \cite{poincarehopf} or 
to some higher dimensional part of space \cite{FormCurvatures}. If the energies $\omega(x)$ are
pushed in a deterministic way to the vertex $v \in x$, on which a function $g: V \to \mathbb{R}$ is
minimal, we get Poincar\'e-Hopf. Since the curvature is not split, the result is a divisor, an 
integer valued function on the vertices. This rule does not necessarily have to come from a 
gradient field, it can just be a rule $x \in G \to v=F(x) \in V$ with $v \in x$ which we can consider
to be a vector field. If we take a probability space of such rules and average the index 
we get index expectation curvatures $K(v)={\rm E}[i_g(v)]$ or $K(v)={\rm E}[i_F(v)]$ \cite{indexexpectation}. 
The Levitt curvature $\sum_{k={-1}} (-1)^k f_k(S(x))/(k+2)$ is such an average. For 2-dimensional 
manifolds, this curvature is the Eberhard curvature $K(v) = 1-d(v)/6$. 

\paragraph{}
Can one see the second order curvature discussed here in the context of sectional curvature
as an index expectation. Yes, this can be done general, whenever we 
can look at facet curvatures (which works if the set of facets cover the simplicial complex). 
Start with a map $F_1$ that maps every simplex to a triangle containing it, then 
chose a map $F_2$ from triangles to vertices. This produces a map
$T:V \to V$ on vertices $V$ which can again be seen as a vector field. (Now this is more natural
as this notion of vector field also defines a dynamical system). 
We can push the energies of simplices to the vertices. This can be
done in any dimension provided that the maximal simplices cover the graph. 
It now defines Poincar\'e-Hopf indices on the vertices which add up to 
Euler characteristic. 

\begin{tiny}
\lstset{language=Mathematica} \lstset{frameround=fttt}
\begin{lstlisting}[frame=single]
Generate[A_]:=If[A=={},{},Sort[Delete[Union[Sort[Flatten[Map[Subsets,A],1]]],1]]];
Whitney[s_]:=Generate[FindClique[s,Infinity,All]];
ToGraph[G_]:=UndirectedGraph[n=Length[G];Graph[Range[n],
Select[Flatten[Table[k->l,{k,n},{l,k+1,n}],1],(SubsetQ[G[[#[[2]]]],G[[#[[1]]]]])&]]];

s=CompleteGraph[{2,2,2}]; s= ToGraph[Whitney[s]]; s=ToGraph[Whitney[s]]; 
G=Whitney[s]; G0=Select[G,Length[#]<=3 &]; G1=Select[G,Length[#]==1 &];
G2=Select[G,Length[#]==2 &]; G3 = Select[G,Length[#]==3 &];
F1=Table[G0[[k]]->RandomChoice[Select[G3,SubsetQ[#,G0[[k]]] &]],{k,Length[G0]}];
F2=Table[G3[[k]]->RandomChoice[Select[G1,SubsetQ[G0[[k]],#] &]],{k,Length[G3]}];
W0=Table[-(-1)^Length[G0[[k]]],{k,Length[G0]}]; W1=W2=Table[0,{Length[G0]}];
Do[x=G0[[k]];W1[[Position[G0,x /.F1][[1,1]]]]+=W0[[k]],{k,Length[G0]}];
Do[x=G0[[k]];W2[[Position[G0,x /.F2][[1,1]]]]+=W1[[k]],{k,Length[G0]}];
{Total[W0], Total[W1], Total[W2]}

(* The two random maps define a self map on vertices. Lets look at the graph *)
T=Table[x=G1[[k]]; x-> (x /. F1 /. F2),{k,Length[G1]}];
Graph[T]
\end{lstlisting}
\end{tiny}

\paragraph{}
Here implement this by choosing random functions on vertices and facets:

\begin{tiny}
\lstset{language=Mathematica} \lstset{frameround=fttt}
\begin{lstlisting}[frame=single]
Generate[A_]:=If[A=={},{},Sort[Delete[Union[Sort[Flatten[Map[Subsets,A],1]]],1]]];
Whitney[s_]:=Generate[FindClique[s,Infinity,All]]; w[x_]:=-(-1)^Length[x];
ToGraph[G_]:=UndirectedGraph[n=Length[G];Graph[Range[n],
  Select[Flatten[Table[k->l,{k,n},{l,k+1,n}],1],(SubsetQ[G[[#[[2]]]],G[[#[[1]]]]])&]]];
Barycentric[s_]:=ToGraph[Whitney[s]]; Euler[s_]:=Total[Map[w,Whitney[s]]];

Index[s_]:=Module[{G,V,F,(* f,g,*)W,J,q}, (* index with functions on vertices/facets*)
G=Whitney[s]; q=Max[Map[Length,G]];       (* clique number = dim(G)+1 *)
V=Flatten[Select[G,Length[#]==1&]]; (*vertices*) F=Select[G,Length[#]==q&];  (* facets *)
f=Table[V[[k]]->Random[],{k,Length[V]}]; g=Table[F[[k]]->Random[],{k,Length[F]}];
minf[x_]:=x[[First[Flatten[Position[x /. f,Min[x /. f]]]]]]; (* minimum position of f on x *)
ming[x_]:=Module[{S=Select[F,SubsetQ[#,x] &]},S[[First[Flatten[Position[S/.g,Min[S/.g]]]]]]];
W=Table[G[[k]] -> -(-1)^Length[G[[k]]],{k,Length[G]}];   (* primordial energy *)
J=Table[0,{Length[V]}]; Do[x=G[[k]];J[[minf[ming[x]]]] += (x /. W) ,{k,Length[G]}]; J]  

s=Barycentric[PolyhedronData["Icosahedron","Skeleton"]];i=Index[s];
Print[{i,Total[i],Euler[s]}];
s=Barycentric[CompleteGraph[{2,2,2}]]; i=Index[s]; 
Print[{i,Total[i],Euler[s]}];
\end{lstlisting}
\end{tiny}

\section{More remarks and questions}

\paragraph{}
To summarize, we have defined a geodesic flow on discrete generalized q-manifolds $G$ as a finite permutation
on a principle bundle $E$ of the dual manifold $\hat{G}$. The reason were obvious shortcomings
from the usual geodesic notion. There is no way we can on a discrete manifold give a natural 
dynamics on edges for example. In rare cases this is possible. 
For Eulerian 2-manifolds one can define a geodesic flow as well as 
a billiard \cite{knillgraphcoloring3}. 

\paragraph{}
The space on which the flow runs is a frame bundle, the set of oriented facets $x$. The order of a simplex 
plays the role of the frame. The geodesic flow uses a discrete "moving frame" terminology. In 
some sense we are close to Cartan's 1937 approach to geometry. 
Through every $x$, a geodesics can start into $(q+1)!$ directions
but as in the continuum there can be self-intersections as in the continuum.
Every $(q-2)$-simplex $x$ defines a $2$-dimensional manifold $M(x)$ that is embedded in $G$ and
immersed in the bundle $E$. The sectional curvature $K(x)$ is defined is then a second 
order curvature.

\paragraph{}
Since the setup can be extended to manifolds with boundary we can study discrete
billiards. We can search for example for {\bf ergodic billiards}, billiards where the geodesic flow
has a single cycle in the frame bundle $P$. 
We have also seen that the geodesic flow can be defined for a large class of pure simplicial 
complexes. 

\paragraph{} 
We could write the dynamics $T$ as a product of involutions by
looking at open and closed points in $\hat{G}$. This is a larger theme 
\cite{GraphsGroupsGeometry}. The geodesic dynamics
is an action of the cyclic group $\mathbb{Z}_t$ where $t$
is the GCD of all periods of the geodesics. It can also be seen as an action of 
the dihedral group $\mathbb{D}_t$. To do so, double $\hat{G}$, where the first 
copy consists of $x$ as an open set and the second $B(x)$ as the closure. 
Now define on the open part
$$ A(x) = (x_1, \dots, x_{d-1},x_0') $$
and declare it closed on the closed part
$$ A(x) = (x_0',x_2, \dots, x_d) $$
and declare it closed. Now $A^2=Id$. With the additional involution $B$ switching open 
and closed sets, we have then $T=B \circ A$ and $T^{-1} = A \circ B$. 

\bibliographystyle{plain}

\end{document}